\newfont{\gothic}{eufm10 scaled 1100}
\theoremstyle{plain}    
\newtheorem{thm}{Theorem}[section]
\numberwithin{equation}{section} 
\numberwithin{figure}{section} 
\theoremstyle{plain}    
\newtheorem{cor}[thm]{Corollary} 
\theoremstyle{plain}    
\newtheorem{conj}[thm]{Conjecture} 
\theoremstyle{plain}    
\theoremstyle{plain}
\newtheorem{lem}[thm]{Lemma} 
\theoremstyle{plain}    
\newtheorem{prop}[thm]{Proposition} 
\theoremstyle{plain}    
\newtheorem{Def}[thm]{Definition} 
\theoremstyle{remark}
\newtheorem{rem}[thm]{Remark}
\theoremstyle{remark}    
\theoremstyle{remark}    
\newtheorem{exm}[thm]{Example}
\begin{document}

\title{K\"ahler packings and Seshadri constants on projective complex surfaces}


\author{Thomas Eckl}

\keywords{}

\subjclass{}


\address{Thomas Eckl, Department of Mathematical Sciences, The University of Liverpool, Mathematical
               Sciences Building, Liverpool, L69 7ZL, England, U.K.}

\email{thomas.eckl@liv.ac.uk}

\urladdr{http://pcwww.liv.ac.uk/~eckl/}

\maketitle

\begin{abstract}
In analogy to the relation between symplectic packings and symplectic blow ups we show that multiple point Seshadri constants on projective complex surfaces can be calculated as the supremum of radii of multiple K\"ahler ball embeddings. We exemplify this connection on toric surfaces, also discussing how toric moment maps reflect the packing.
\end{abstract}


\pagestyle{myheadings}
\markboth{THOMAS ECKL}{K\"AHLER PACKINGS AND SESHADRI CONSTANTS}

\setcounter{section}{-1}

\section{Introduction}

\noindent Symplectic Topology, searching for global properties of symplectic manifolds, is a rather new branch of the old and venerable study of symplectic structures (see the in-depth treatise of McDuff and Salamon, \cite{McDS95}). One of its most striking successes is the analysis and solution of several symplectic packing problems: How large can symplectic balls of a given number be when disjointly embedded in a given symplectic manifold? These questions are especially attractive as they exhibit the fundamental nature of symplectic structures: local flexibility vs (sometimes) global strictness. In particular, symplectic packing is not so strict as Euclidean (that is, distance-and-angle preserving) packing, leading to questions like the Kepler conjecture on ball packings (and its solution by Hales \cite{Hal05, Hal12}), but may be not so flexible as only volume-preserving packing. So some symplectic packing problems reveal obstacles to packings without gaps, whereas other packings are possible without gaps. 

\noindent One of the most prominent series of such symplectic packing problems asks how large symplectic balls of a given number can be when disjointly embedded in the complex projective plane $\mathbb{CP}^2$. In more details, consider balls $B_0(r) \subset \mathbb{R}^4$ of radius $r$ centered in $0$ together with the symplectic form $\omega_{\mathrm{std}}$ obtained by restricting the standard symplectic form on $\mathbb{R}^4$. If $\coprod_{q=1}^k B_0(r_q)$ denotes the disjoint union of $k$ of these balls, with possibly different radii, and $\omega_{\mathrm{FS}}$ denotes a Fubini-Study K\"ahler form on $\mathbb{CP}^2$ , then a symplectic packing of $\mathbb{CP}^2$ with $k$ symplectic balls is defined as a symplectic embedding
\[ \iota: \coprod_{q=1}^k B_0(r_q) \hookrightarrow \mathbb{CP}^2, \]
that is, $\iota$ is a smooth embedding such that $\iota^\ast \omega_{\mathrm{FS}}|_{B_0(r_q)} = \omega_{\mathrm{std}}$. The symplectic packing problem asks on conditions on the radii $r_q$ such that such a symplectic packing exist, and also how it can be explicitely constructed.

\noindent McDuff and Polterovich \cite{McDP94} connected this problem first to symplectic blow ups and then to Algebraic Geometry: They showed that a symplectic packing with balls of radii $r_q$ is only possible if on $\sigma: X \rightarrow \mathbb{CP}^2$, the blow up of $\mathbb{CP}^2$ (considered as a complex manifold) in $k$ points $x_1, \ldots, x_k$, there exists a symplectic form representing the cohomology class of $\sigma^\ast l - \pi \sum_{q=1}^k r_q^2 e_q$, where $l$ and the $e_q$ are Poincar\'e dual to a line $L \subset \mathbb{CP}^2$ and the exceptional divisors $E_q = \sigma^{-1}(x_q)$. Then they proved that as long as $k \leq 8$ the only obstacles to the existence of such a symplectic form are the same as for the existence of a K\"ahler form in this cohomology class, namely $(-1)$-curves on $X$. Thus, these symplectic packing problems are merged with the algebraic-geometric theory of del Pezzo surfaces already extensively studied in the 19th century (see \cite{M74} for a survey and results). 

\noindent Next, Biran \cite{Bir97} was able to prove that $(-1)$-curves remain the only obstacles for symplectic packings with $k \geq 9$ balls, and as a consequence he showed the symplectic analogue of a celebrated algebraic-geometric conjecture named after Nagata, who came across it when solving Hilbert's Fourteenth Problem \cite{Nag59}.
\begin{conj}[Nagata]
With notations as above and $k \geq 9$, there is a K\"ahler form representing the cohomology class $\sigma^\ast l - \epsilon \cdot \sum_{q=1}^k e_q$ for all $\epsilon < \frac{1}{\sqrt{k}}$ if the blown-up points $x_1, \ldots, x_k$ are chosen sufficiently general.
\end{conj}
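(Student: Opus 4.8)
\medskip
\noindent\textbf{Proof proposal.}
This is the Nagata conjecture in cohomological disguise --- proved by Nagata for $k$ a perfect square but open in general --- so I expect to reduce it to a clean statement and then be blocked exactly where the problem is open. Put $\alpha_\epsilon:=\sigma^\ast l-\epsilon\sum_{q=1}^k e_q$, so $\alpha_\epsilon^2=1-k\epsilon^2$, which is $>0$ precisely on the stated range $\epsilon<\frac{1}{\sqrt k}$. By the Nakai--Moishezon criterion, in the version valid for real $(1,1)$-classes on a surface (Campana--Peternell, Demailly--Paun; on the blow-up $X$ every class is algebraic, so this is the classical Nakai/Kleiman criterion), $\alpha_\epsilon$ is represented by a Kähler form if and only if $\alpha_\epsilon\cdot C>0$ for every irreducible curve $C=d\,\sigma^\ast l-\sum_q m_q e_q\subset X$. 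Letting $\epsilon\to\frac{1}{\sqrt k}$, this becomes $d\ge\frac{1}{\sqrt k}\sum_q m_q$, i.e.\ Nagata's bound for the degree of a plane curve through $k$ general points in terms of its multiplicities. So the plan is to prove this bound for every irreducible curve, for a very general configuration $x_1,\dots,x_k$.

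First I would clear the easy curves. If $C^2=d^2-\sum_q m_q^2\ge0$, then Cauchy--Schwarz gives $\sum_q m_q\le\sqrt k\,d$ and hence $\alpha_\epsilon\cdot C\ge d(1-\sqrt k\,\epsilon)>0$, with nothing to prove. If $C$ is a $(-1)$-curve, then $C\cdot K_X=-1$ forces $\sum_q m_q=3d-1$, whence $\alpha_\epsilon\cdot C=d(1-3\epsilon)+\epsilon>0$, using only $\epsilon<\frac{1}{\sqrt k}\le\frac13$ for $k\ge9$. This $(-1)$-curve case is exactly the one already settled symplectically: by the McDuff--Polterovich correspondence a symplectic packing of $\mathbb{CP}^2$ by $k$ equal balls of capacity $\epsilon$ is the same as a symplectic form in the class $\alpha_\epsilon$, by Biran's theorem the only packing obstructions for $k\ge9$ are $(-1)$-curves, and the computation just made shows none of them obstructs for $\epsilon<\frac{1}{\sqrt k}$ --- so the symplectic analogue holds on the whole range, with no genericity hypothesis whatsoever. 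What is left is precisely the irreducible curves of negative self-intersection that are not $(-1)$-curves; conjecturally there are none, and this ``$(-1)$-curve conjecture'' in fact implies Nagata's.

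So the heart of the plan is to show that, for a very general configuration, the only irreducible curves on $X$ with $C^2<0$ are the $(-1)$-curves --- or at least that no offending class $d\,\sigma^\ast l-\sum_q m_q e_q$ with $d<\frac{1}{\sqrt k}\sum_q m_q$ is effective. I would attempt a degeneration argument: specialize the $k$ points to a well-understood configuration (collisions of points, points constrained to a line or a conic, or a configuration with extra symmetry as in Nagata's treatment of the perfect-square case), read off the relevant linear systems on the special fibre, and propagate the non-effectivity of the offending classes back to the general fibre via semicontinuity of $h^0$.

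That last step is the genuine obstacle, and the reason the conjecture is open: the obvious special configurations carry \emph{more} sections than expected (collinear points force a high-multiplicity line to split off, points on a conic behave similarly), so they cannot witness the emptiness one needs, and for non-square $k\ge10$ no useful degeneration is known --- the degeneration-of-the-plane and specialization methods of Ciliberto--Miranda, Harbourne--Ro\'e and others, together with the asymptotic lower bounds for multi-point Seshadri constants, dispose of many quasi-homogeneous cases and give the correct order of magnitude, but not the sharp bound. Where extra structure is available the argument does close, and I would present that: for $k=m^2$ a perfect square (in particular $k=9$), Nagata's theorem that $m\,\sigma^\ast l-\sum_q e_q$ is nef gives, together with the nefness of $m\,\sigma^\ast l$, that $\alpha_\epsilon$ is ample --- hence Kähler --- for every $0<\epsilon<\frac1m=\frac{1}{\sqrt k}$ by a short Nakai--Moishezon computation. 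For general $k\ge10$ I expect to reach the clean reduction above and then be stuck exactly at the negative curves, as everyone has been.
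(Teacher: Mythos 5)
You have correctly identified that this statement is Nagata's Conjecture, which the paper states as a \emph{conjecture} and does not prove --- there is no proof in the paper to compare yours against, and no complete proof is known. Within that constraint your analysis is sound: the reduction via the Nakai--Moishezon criterion for real $(1,1)$-classes to the degree bound $\sqrt{k}\,d\geq\sum_q m_q$ is exactly the equivalence the paper itself records between its two versions of the conjecture (via Kodaira embedding and Nakai--Moishezon applied to strict transforms); the Cauchy--Schwarz disposal of curves with $C^2\geq 0$ is correct; the computation $\alpha_\epsilon\cdot C=d(1-3\epsilon)+\epsilon>0$ for $(-1)$-curves, using $\epsilon<1/\sqrt{k}\leq 1/3$, is correct and is precisely the mechanism behind Biran's theorem that the symplectic analogue holds for $k\geq 9$; and the perfect-square case via the nefness of $m\,\sigma^\ast l-\sum_q e_q$ and the decomposition $\alpha_\epsilon=\epsilon\,(m\,\sigma^\ast l-\sum_q e_q)+(1-\epsilon m)\,\sigma^\ast l$ is a correct proof of the conjecture for $k=m^2$.

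The genuine gap is the one you name yourself: for non-square $k\geq 10$ nothing rules out an irreducible curve of negative self-intersection, other than a $(-1)$-curve, whose class $d\,\sigma^\ast l-\sum_q m_q e_q$ violates $\sqrt{k}\,d\geq\sum_q m_q$, and no degeneration of the $k$ points is known whose special fibre both kills the offending linear systems and lets semicontinuity propagate that emptiness back to the general configuration (the obvious specializations acquire extra sections, as you observe). This is not an error in your argument so much as the open problem itself, and your write-up locates it accurately. The one thing to flag is that what you have is not a proof of the stated conjecture but a proof of its special cases ($k$ a perfect square, plus the symplectic analogue) together with a correct reduction of the remaining cases to the standard open question about negative curves on blow-ups of $\mathbb{P}^2$; presented as such, it is a fair and honest account of the state of the art.
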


\noindent Note that this is the K\"ahler version of a purely algebraic-geometric statement:
\begin{conj}[Nagata, algebraic-geometric version]
Let $C = \{F = 0\}$ be an irreducible algebraic curve in $\mathbb{CP}^2$, given by an irreducible homogeneous polynomial $F = F(X,Y,Z)$ of degree $d$ in three homogeneous variables $X, Y, Z$ and with multiplicity $m_q$ in the point $x_q$ (that is, $m_q$ is the lowest degree of a non-vanishing term in the Taylor series expansion of $F$ around $x_q$). If the points $x_1, \ldots, x_k \in \mathbb{CP}^2$ are chosen sufficiently general then
\[ \sqrt{k} d \geq \sum_{q=1}^k m_q. \] 
\end{conj}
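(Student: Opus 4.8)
The plan is to reduce the statement to the K\"ahler version of Nagata's conjecture stated just above, and then to describe how one would try to establish that version; I should flag at once that the K\"ahler version is itself Nagata's conjecture --- open for $k$ not a perfect square --- so the second step is where all the difficulty resides.

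\emph{Step 1: reduction to the blow-up.} Pass to $\sigma : X \to \mathbb{CP}^2$, the blow-up in the general points $x_1,\ldots,x_k$, and let $\widetilde{C} \subset X$ be the strict transform of $C$, whose class is $d\,\sigma^\ast l - \sum_{q=1}^k m_q e_q$. Assume the K\"ahler version: for every $\epsilon < 1/\sqrt{k}$ there is a K\"ahler form $\omega_\epsilon$ on $X$ representing the class $h_\epsilon := \sigma^\ast l - \epsilon \sum_{q=1}^k e_q$. Since $\widetilde{C}$ is a complex curve we have $\int_{\widetilde{C}} \omega_\epsilon > 0$, and evaluating this intersection number with $(\sigma^\ast l)^2 = 1$, $\sigma^\ast l \cdot e_q = 0$ and $e_i \cdot e_j = -\delta_{ij}$ gives
\[ 0 \;<\; \int_{\widetilde{C}} \omega_\epsilon \;=\; h_\epsilon \cdot \widetilde{C} \;=\; d - \epsilon \sum_{q=1}^k m_q . \]
Letting $\epsilon \to 1/\sqrt{k}$ yields $d \ge \tfrac{1}{\sqrt{k}} \sum_{q=1}^k m_q$, that is $\sqrt{k}\,d \ge \sum_{q=1}^k m_q$. (Conversely this inequality for all curves, together with $h_\epsilon \cdot e_q = \epsilon > 0$ and $h_\epsilon^2 = 1 - k\epsilon^2 > 0$, forces ampleness --- hence the K\"ahler property --- of $h_\epsilon$ by Nakai--Moishezon, so the two versions are equivalent.)

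\emph{Step 2: the K\"ahler/ampleness statement.} By Nakai--Moishezon, $h_\epsilon$ is ample iff $h_\epsilon \cdot \Gamma > 0$ for every irreducible curve $\Gamma \subset X$; after reducing $\Gamma$ modulo the $e_q$ the only possible violators are strict transforms of irreducible plane curves, so the assertion is self-referential and one needs an external input bounding the singularities of plane curves through general points. Two standard routes suggest themselves. First, \emph{specialization}: degenerate the $k$ general points to a special configuration --- collinear points, points on a conic, or a ``collision'' degeneration in the spirit of Ciliberto and Miranda --- on which the linear systems $|dL - \sum_q m_q x_q|$ can be computed, prove the bound there, and transport it back by semicontinuity of $h^0$. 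Second, \emph{symplectic input}: Biran's theorem \cite{Bir97} already produces a symplectic form in the class $h_\epsilon$ for general points (the symplectic Nagata conjecture), and one would then try to upgrade that symplectic form to a K\"ahler form, or to extract from it directly the non-existence of over-singular curves.

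\emph{Main obstacle.} The specialization route collides with the combinatorial explosion of special configurations, which is the content of the (likewise open) Segre--Harbourne--Gimigliano--Hirschowitz conjectures; the symplectic route collides with the fact that a symplectic form in $h_\epsilon$ carries no complex-analytic information --- the very flexibility that makes Biran's theorem provable is what prevents it from yielding an algebraic curve or a K\"ahler metric. Bridging symplectic flexibility and algebraic rigidity is the heart of the matter, and it remains open: unconditionally one has only Nagata's original induction \cite{Nag59} for $k$ a perfect square together with a handful of later refinements, which is precisely why the assertion above is posed as a conjecture.
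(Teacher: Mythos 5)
The statement you were asked to prove is posed in the paper as a conjecture (Nagata's conjecture), and the paper gives no proof of it --- it only sketches the equivalence with the K\"ahler version via Kodaira's embedding theorem and the easy half of Nakai--Moishezon applied to the strict transform $\overline{C}$, which is exactly your Step 1. Your proposal correctly reproduces that reduction, and you are right to decline to prove the remaining content (which is open for $k$ not a perfect square); there is nothing further in the paper to compare against.
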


\noindent The equivalence of these two conjectures follows from the fact that K\"ahler forms representing an integral cohomology class are curvature forms of hermitian metrics on an ample line bundle (that is Kodaira's embedding theorem \cite[Thm.III.4.1, III.4.6]{Wel80}) and that intersection numbers of ample cohomology classes with algebraic curves are always positive (that is the easy half of Nakai-Moishezon's Ampleness Criterion \cite[Thm.1.2.23]{LazPAG1}). The Nakai-Moishezon Criterion is applied on the strict transform $\overline{C}$ on $X$, that is the inverse image of $C$ under the blow-up map $\sigma$ without the exceptional divisors $E_q$ (intersected $m_q$ times by $\overline{C}$). 

\noindent More on Biran's proof and on what the symplectic methods tell us for the algebraic situation (in particular, why they cannot be used so easily for Nagata's Conjecture) can be found in Biran's lucid survey \cite{Bir01}. 

\noindent The aim of this note is to show that the algebraic conjecture of Nagata is equivalent to a more restricted packing problem, namely a \textit{K\"ahler packing problem}.

\noindent We work in a more general setting: $V$ is assumed to be a $2$-dimensional projective complex manifold (surface for short), $x_1, \ldots, x_k \in V$ distinct points on $V$ and $L$ an ample line bundle $L$ on $V$. Sometimes we interpret $L$ also as a divisor on $V$. Let $\sigma: \widetilde{V} \rightarrow V$ be the blow up of the $k$ points $x_1, \ldots, x_k$, with exceptional divisors $E_q = \sigma^{-1}(x_q)$.
\begin{Def}
The multi-point Seshadri constant $\epsilon(L;x_1, \ldots, x_k)$ is defined as 
\[ \sup \{ \epsilon > 0: \mathrm{A\ multiple\ of\ } \sigma^\ast L - \epsilon \sum_{q=1}^k E_q\ \mathrm{is\ an\ ample\ divisor}\}. \]
\end{Def}

\noindent Nagata's Conjecture predicts that $\epsilon(L; x_1, \ldots, x_k) = \frac{1}{\sqrt{k}}$ on $V = \mathbb{CP}^2$, where $L \subset \mathbb{CP}^2$ is a line and $x_1, \ldots, x_k \in \mathbb{CP}^2$ are points in sufficiently general position. Seshadri constants were busily investigated in Algebraic Geometry during the last years, as a measure of local positivity (see e.g. \cite[Ch.5]{LazPAG1}).

\noindent K\"ahler packings can be defined on arbitrary K\"ahler manifolds: 
\begin{Def}
Let $(V, \omega)$ be a $n$-dimensional K\"ahler manifold with K\"ahler form~$\omega$. Then a holomorphic embedding 
\[ \phi = \coprod_{q=1}^k \phi_q: \coprod_{q=1}^k B_0(r_q) \rightarrow V \]
is called a K\"ahler embedding of $k$ disjoint complex balls in $\mathbb{C}^n$ centered in~$0$, of radii~$r_q$, if $\phi_q^\ast(\omega) = \omega_{\mathrm{std}}$, the standard K\"ahler form on $\mathbb{C}^n$ restricted to $B_0(r_q)$.
\end{Def}

\noindent  Using the setting above we prove in Section~\ref{packing-sec}:
\begin{thm}   \label{packing-thm}
For all $0 < \epsilon < \epsilon(L;x_1, \ldots, x_k)$ there exists a K\"ahler form $\omega$ on $V$ representing the first Chern class $c_1(L)$ of $L$ and a K\"ahler packing (wrt $\omega$) of $k$ disjoint balls of radii $\sqrt{\frac{\epsilon}{\pi}}$ into $V$. Vice versa, if for $\epsilon > 0$ there exists such a K\"ahler packing wrt a K\"ahler form $\omega$ representing $c_1(L)$ then $\epsilon < \epsilon(L;x_1, \ldots, x_k)$.
\end{thm}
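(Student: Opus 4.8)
\noindent The plan is to deduce both halves from the translation ``K\"ahler ball packing on $V$ $\leftrightarrow$ K\"ahler form on the blow up $\widetilde V$'', with the passage to Algebraic Geometry supplied by Kodaira's embedding theorem and the Nakai-Moishezon criterion. The local ingredient is a model K\"ahler form on $\mathrm{Bl}_0\mathbb{C}^2=\mathrm{Tot}(\mathcal{O}_{\mathbb{P}^1}(-1))$, with blow down $\rho$ and exceptional curve $E$: for each $\lambda>0$ and $\mu>\lambda$ one has a $U(2)$-invariant K\"ahler form $\tau_\lambda=\frac{i}{2}\partial\bar\partial\varphi_\lambda(|z|^2)$, with $\varphi_\lambda(t)=t$ for $t\ge\mu^2$ and $\varphi_\lambda(t)=t+\lambda^2\log t+O(t)$ near $0$, which coincides with $\rho^\ast\omega_{\mathrm{std}}$ outside $\rho^{-1}(B_0(\mu))$, extends smoothly across $E$, has $\int_E\tau_\lambda=\pi\lambda^2$, and is positive because $\varphi_\lambda$ can be chosen with $\varphi_\lambda'>0$ and $t\mapsto t\varphi_\lambda'(t)$ strictly increasing. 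This is the K\"ahler counterpart of the Guillemin-Sternberg reduction model underlying McDuff and Polterovich's symplectic blow up.

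\noindent For the converse statement the model is not needed. Assume a K\"ahler packing $\phi=\coprod_q\phi_q$ of $k$ disjoint balls of radius $\sqrt{\epsilon/\pi}$ into $V$ with respect to $\omega\in c_1(L)$, and set $D:=\sigma^\ast L-\epsilon\sum_q E_q$. The $k$ disjoint (closed) images do not fill $V$, so $\frac{1}{2}L^2=\mathrm{vol}(V,\omega)>\sum_q\mathrm{vol}(B_0(\sqrt{\epsilon/\pi}),\omega_{\mathrm{std}})=\frac{k}{2}\epsilon^2$, giving $D^2=L^2-k\epsilon^2>0$. For an irreducible curve $C\subset\widetilde V$: $D\cdot E_q=\epsilon>0$; and if $C=\overline{C_0}$ is the strict transform of an irreducible $C_0\subset V$ with $m_q=\mathrm{mult}_{x_q}C_0$, then $\phi_q^{-1}(C_0)$ is a one-dimensional analytic subset of $B_0(\sqrt{\epsilon/\pi})$ through $0$ of multiplicity $m_q$, so the Lelong monotonicity inequality gives $\int_{C_0\cap\phi_q(B_0(\sqrt{\epsilon/\pi}))}\omega=\int_{\phi_q^{-1}(C_0)}\omega_{\mathrm{std}}\ge m_q\epsilon$; since $C_0$ is not contained in the union of the relatively compact balls it has positive $\omega$-area outside them, whence $L\cdot C_0=\int_{C_0}\omega>\epsilon\sum_q m_q$, i.e. $D\cdot\overline{C_0}>0$. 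By the Nakai-Moishezon criterion for $\mathbb{R}$-divisors on a surface, $D$ is ample; since the ample cone is open its supremum is not attained, so ampleness of $\sigma^\ast L-\epsilon\sum_q E_q$ gives $\epsilon<\epsilon(L;x_1,\dots,x_k)$.

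\noindent For the existence statement, fix $0<\epsilon<\epsilon(L;x_1,\dots,x_k)$, so $\sigma^\ast L-\epsilon\sum_q E_q$ is ample; by Kodaira there is a K\"ahler form $\widetilde\omega_0$ on $\widetilde V$ in $\sigma^\ast c_1(L)-\epsilon\sum_q e_q$, with $\int_{E_q}\widetilde\omega_0=\epsilon$. The heart is a \emph{K\"ahler blow down lemma} normalising $\widetilde\omega_0$ near each $E_q$: by Grauert's theorem (the normal bundle of $E_q$ being $\mathcal{O}(-1)$) a neighbourhood $W_q$ of $E_q$ is biholomorphic to a neighbourhood of the zero section in $\mathrm{Tot}(\mathcal{O}_{\mathbb{P}^1}(-1))$; on $W_q$ one has $H^2=\mathbb{Z}$ and, with $\pi\lambda_q^2=\epsilon$, $\int_{E_q}\widetilde\omega_0=\int_{E_q}\tau_{\lambda_q}$, so $\widetilde\omega_0-\tau_{\lambda_q}=i\partial\bar\partial f_q$ on a smaller neighbourhood by the $\partial\bar\partial$-lemma there; with cut-off functions $\chi_q$ supported on it the closed form $\widetilde\omega:=\widetilde\omega_0-\sum_q i\partial\bar\partial(\chi_q f_q)$ stays in the class $\sigma^\ast c_1(L)-\epsilon\sum_q e_q$, equals $\tau_{\lambda_q}$ near $E_q$ and $\widetilde\omega_0$ away from the $W_q$. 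Since $\widetilde\omega$ then equals $\rho^\ast\omega_{\mathrm{std}}$ near the sphere $\{|z|=\mu_q\}$ for a suitable $\mu_q>\lambda_q$, excising $\rho^{-1}(B_0(\mu_q))$ and gluing in $B_0(\mu_q)$ with $\omega_{\mathrm{std}}$ blows $\widetilde\omega$ down to a K\"ahler form $\omega$ on $V$ together with holomorphic embeddings $\phi_q\colon B_0(\mu_q)\hookrightarrow(V,\omega)$, $\phi_q^\ast\omega=\omega_{\mathrm{std}}$; as $\mu_q>\lambda_q=\sqrt{\epsilon/\pi}$ this contains the desired packing of balls of radius $\sqrt{\epsilon/\pi}$, and because $\widetilde\omega$ is visibly the blow up of $\omega$ along the $\phi_q$ with exceptional areas $\epsilon$, its class $\sigma^\ast[\omega]-\epsilon\sum_q e_q$ equals $\sigma^\ast c_1(L)-\epsilon\sum_q e_q$, so $[\omega]=c_1(L)$ since $\sigma^\ast$ is injective on $H^2$.

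\noindent The main obstacle is the positivity of $\widetilde\omega$ in the transition annuli of the blow down lemma: solving $\widetilde\omega_0-\tau_{\lambda_q}=i\partial\bar\partial f_q$ is routine, but $i\partial\bar\partial(\chi_q f_q)$ is not a priori small. I expect this to be handled by first moving $\widetilde\omega_0$, within its class, to a K\"ahler form whose restriction to $E_q$ is $\lambda_q^2$ times the Fubini-Study form, and then performing the cut-off on a very thin tubular neighbourhood: under the $\mathbb{C}^\ast$-action dilating the fibres of $\mathcal{O}_{\mathbb{P}^1}(-1)$, the rescalings of $\widetilde\omega_0$ and of $\tau_{\lambda_q}$ converge to the same limit near $E_q$, so $f_q$ becomes arbitrarily $C^2$-small on small enough neighbourhoods and the interpolation stays K\"ahler. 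This K\"ahler sharpening of the symplectic blow up / blow down of McDuff and Polterovich, carried out compatibly with the given complex structure on $\widetilde V$, is the only delicate point; everything else reduces to the Nakai-Moishezon criterion, Kodaira's theorem, and the elementary geometry of the model $\tau_\lambda$.
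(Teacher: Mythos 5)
Your converse direction is correct in outline and takes a genuinely different route from the paper: the paper disposes of it in one line by citing the symplectic blow-up of McDuff--Polterovich \cite[\S 5.3]{McDP94}, whereas you stay on $\widetilde V$ and verify the Nakai--Moishezon inequalities for the $\mathbb{R}$-divisor $D=\sigma^\ast L-\epsilon\sum_q E_q$ directly, using the volume identity for $D^2$ and Lelong--Jensen monotonicity for $D\cdot\overline{C_0}$. This is more self-contained and arguably more illuminating. The one point you should not wave at is strictness: the complement of the $k$ disjoint open balls could a priori have measure zero, and $C_0\setminus\bigcup_q\phi_q(B_0(r))$ could a priori have zero $\omega$-area, so ``the balls do not fill $V$'' does not by itself give $D^2>0$ and $D\cdot\overline{C_0}>0$ rather than $\geq 0$; since nef-and-big is not ample, this gap matters for the strict conclusion $\epsilon<\epsilon(L;x_1,\dots,x_k)$ and needs an extra argument (the paper's own one-line treatment has the same open/closed-ball issue).

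The forward direction, however, has a genuine gap, and it sits exactly at the point you yourself flag as delicate. Arranging $\widetilde\omega_{0|E_q}=\tau_{\lambda_q|E_q}$ only normalizes $f_q$ along $E_q$ (it forces $f_{q|E_q}$ to be constant); it does not control the $1$-jet and $2$-jet of $f_q$ in the fiber direction. Generically $f_q=2\,\mathrm{Re}(a(t)x)+O(|x|^2)$ with $a$ not holomorphic, so on a cut-off annulus of width $\delta$ the term $f_q\cdot i\partial\overline{\partial}\chi_q$ has size $\delta\cdot\delta^{-2}=\delta^{-1}$ and blows up as the neighbourhood shrinks. The rescaling you propose does not rescue this: under the fiber dilation $m_s$ both $m_s^\ast\widetilde\omega_0$ and $m_s^\ast\tau_{\lambda_q}$ converge to $\lambda_q^2\,p^\ast\omega_{\mathrm{FS}}$, which is \emph{degenerate} along the fibers, so while the error $i\partial\overline{\partial}(\chi_q\, m_s^\ast f_q)$ can be made $O(s)$, the positivity reserve of $m_s^\ast\widetilde\omega_0$ in the fiber direction is only $O(s^2)$, and positivity is lost. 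What is actually required is that the local potentials agree to order $\geq 3$ in the fiber coordinate, and producing a global K\"ahler form with this property is the real content of the paper's proof: in Steps 1--2 the metric on $\widetilde L_n$ is built from explicit sections, chosen via Serre vanishing to restrict to prescribed monomials on $E_q$ modulo $x^3$, so that $\phi_2-\phi_1=O(|x|^3)$; the gluing error is then $O(\delta)$ against a form that is \emph{uniformly} positive for fixed $n$, and only afterwards is the McDuff--Polterovich blow-down invoked. You would need to supply an equivalent third-order matching (or an entirely different positivity mechanism) for your blow-down lemma to close.
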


\noindent If we provide the balls with the Fubini-Study form $\omega_{\mathrm{FS}}$ restricted to $B_0(r) \subset \mathbb{C}^2 \subset \mathbb{CP}^2$ we can show that the corresponding K\"ahler problem is equivalent to determining the Seshadri constant, too:  

\begin{cor} \label{FS-packing-cor}
For all $0 < \epsilon < \epsilon(L;x_1, \ldots, x_k)$ there exists a K\"ahler form $\omega$ on $V$ representing the first Chern class $c_1(L)$ of $L$ and (for $R > 0$ arbitrarily large) a K\"ahler packing 
\[ \coprod_{i=1}^k (B_0(R), \frac{\epsilon}{\pi} \cdot \omega_{\mathrm{FS}}) \rightarrow (V, \omega). \]
Vice versa, if for $\epsilon > 0$ there exists such a K\"ahler packings wrt a K\"ahler form $\omega$ representing $c_1(L)$ and for $R$ arbitrary large then $\epsilon < \epsilon(L;x_1, \ldots, x_k)$.
\end{cor}

\noindent In \cite{WN15a, WN15b} Witt Nystr\"om proved similar results in arbitrary dimension, but only for one point.

\noindent In Section~\ref{toric-sec} we discuss K\"ahler packings on a smooth projective complex toric surface $V$ when the blown-up points $x_1, \ldots, x_k$ are fixed points of the torus action on $V$. In this situation it is easy to calculate the multi-point Seshadri constant $\epsilon(L; x_1, \ldots, x_k)$, generalizing the case $k = 1$ discussed in \cite{DiR99}, \cite{BDH+09}, see Cor.~\ref{toric-Sesh-cor}. It is also possible to approximate K\"ahler packings by Fubini-Study balls using K\"ahler forms induced by global sections of large enough multiples of $L$ stable under the torus action, see Thm.~\ref{approx-packing-thm}. Actually, choosing global sections carefully the same idea works for general surfaces $V$. The additional tool needed to prove Thm.~\ref{packing-thm} and Cor.~\ref{FS-packing-cor} is the symplectic blow-up and blow-down procedure developed by McDuff and Polterovich \cite[\S 5]{McDP94}, to glue in flat resp.\ Fubini-Study balls.  

\noindent Finally, we show in Thm.~\ref{approx-packing-thm} that the toric symplectic moment maps induced by those sections pulled back to the embedded balls approximate the Fubini-Study moment map on a Fubini-Study ball. This gives an interpretation to the change from the toric moment polytope of the line bundle $L$ on $V$ to the toric moment polytope of $\pi^\ast L - \epsilon \sum_q E_q$ on the blow-up of $V$ (see Prop.~\ref{ample-blowup-prop} for an exact statement and Ex.~\ref{toric-ex} for an illustration): The cut-off triangles of the moment polytope are the shadows of the embedded balls under the moment map.

\vspace{0.1cm}

\noindent \textit{Acknowledgements.} The author thanks the anonymous referee for suggestions on how to improve the exposition and to include examples, and David Witt Nystr\"om for discussing his results in \cite{WN15a, WN15b} with the author.

\section{Proof of Theorem~\ref{packing-thm} and Corollary~\ref{FS-packing-cor}}  \label{packing-sec}

\noindent The idea to prove one direction of the Thm.~\ref{packing-thm} is to construct K\"ahler forms on the blow-up of $V$ from global sections of $L^{\otimes m}$ vanishing to higher and higher order in the points $x_1, \ldots, x_k$. If the sections are carefully chosen the vanishing is homogeneous in all directions, and the K\"ahler forms get sufficiently flat around the exceptional divisors $E_q$ over $x_q$ to be able to glue in a standard K\"ahler ball of a radius arbitrarily close to $\sqrt{\frac{\epsilon}{\pi}}$. The main technical tool for the gluing procedure is the symplectic blow down described by McDuff and Polterovich \cite[\S 5.4]{McDP94}. 

\noindent In more details, recall that the standard K\"ahler form $\omega_0$ on $\mathbb{C}^2$ is given in affine holomorphic coordinates $(x,y)$ by $\frac{i}{2}(dx \wedge d\overline{x} + dy \wedge d\overline{y})$, whereas the Fubini-Study K\"ahler form $\tau_0$ on $\mathbb{CP}^1$ is given in homogeneous coordinates $[S:T]$ by $\frac{i}{2\pi}\partial\overline{\partial} \log(S\overline{S}+T\overline{T})$. Note that the latter $(1,1)$-form is well-defined on $\mathbb{CP}^1$ because $S\overline{S}+T\overline{T}$ is homogeneous in $S$ and $T$, and that it represents $c_1(\mathcal{O}_{\mathbb{CP}^1}(P))$ for any point $P \in \mathbb{CP}^1$. More generally, if $s_0, \ldots, s_N$ are sections of a line bundle $L$ on a complex manifold $X$ defining an embedding
\[ X \hookrightarrow \mathbb{CP}^N,\ x \mapsto [S_0:\cdots:S_N] = [s_0(x):\cdots:s_N(x)] \]
(for example, if the $s_i$ span $H^0(X,L)$ and $L$ is very ample)
then we can use this embedding to construct a K\"ahler form on $X$, by restricting the Fubini-Study form $\frac{i}{2\pi}\partial\overline{\partial} \log(\sum_{k=0}^N S_k\overline{S}_k)$ in homogeneous coordinates $[S_0:\cdots:S_N]$ on $\mathbb{CP}^N$ to $X$. We say that this restricted K\"ahler form on $X$ is \textit{induced} by the sections $s_0, \ldots, s_N$. The K\"ahler form can also be seen as the curvature form of the hermitian metric $h$ induced by the sections on $L$, defining the length of the vector $\xi(x)$ for each section $\xi$ of $L$ and point $x$ on $X$ by
\[ \parallel \xi \parallel^2_h := \frac{\xi(x)\overline{\xi(x)}}{\sum_{k=0}^N s_k(x)\overline{s_k(x)}}. \]

\noindent Let $(x,y)$ be local complex coordinates around $x_q \in V$, and denote by $S := \frac{x}{y}, T := \frac{y}{x}$ the induced homogeneous coordinates on the exceptional divisor $E_q \cong \mathbb{CP}^1$. If $U_q(\delta)$ denotes a ball centered in $x_q$ of sufficiently small radius $\delta$, measured according to the coordinates $x,y$, then the tubular neighborhood $\sigma^{-1}(U_q(\delta)) \subset \widetilde{V}$ of $E_q$ is projected to $E_q \cong \mathbb{CP}^1$ by a holomorphic map $p_q$ collapsing the lines in $U_q(\delta)$ through $(0,0)$. Furthermore $\sigma^{-1}(U_q(\delta))$ is covered by two charts with coordinates $(x,t)$ resp. $(s,y)$, with transition maps given by $y = xt$ and $s = 1/t$. Note that the exceptional divisor $E_q$ intersects these charts as the vanishing locus of $x$ resp. $y$. 

\noindent Now assume that $\epsilon \in \mathbb{Q}$. Then for $n > 0$ a sufficiently divisible integer, the line bundle $\widetilde{L}_n := \sigma^\ast(L^{\otimes n}) \otimes \mathcal{O}_{\widetilde{V}}(-n\epsilon \cdot \sum_{q=1}^k E_q)$ is ample.
On $U_q(\delta)$ the line bundle $L^{\otimes n}$ is trivial, hence we can define a hermitian metric $h_0$ on $L^{\otimes n}_{|U_q(\delta)}$ by
\[ \parallel \xi \parallel^2_{h_0} := \frac{\xi(x)\overline{\xi(x)}}{e^{x\overline{x}+y\overline{y}}} \]
with everywhere positive curvature form $\frac{1}{\pi} \omega_0 = \frac{i}{2\pi}(dx \wedge d\overline{x} + dy \wedge d\overline{y})$. If $\sigma^\ast h_0$ denotes the pulled back metric on $\sigma^\ast(L^{\otimes n})$ its curvature form $\sigma^\ast \omega_0$ is everywhere semipositive on $U_q(\delta)$ and positive away from $E_q$.

\noindent The sections of $\mathcal{O}_{\sigma^{-1}(U_q(\delta))}(-n\epsilon \cdot \sum_{q=1}^k E_q)$
given in the coordinates $(x,t)$ of one of the charts covering $\sigma^{-1}(U_q(\delta))$ by 
\[ \sqrt{\binom{n\epsilon}{j}} t^j,\ j = 0, \ldots, n\epsilon, \]
define a hermitian metric $h_q$ on $\mathcal{O}_{\sigma^{-1}(U_q(\delta))}(-n\epsilon \cdot \sum_{q=1}^k E_q)$. Note that the coefficient of $t^j$ allows to rewrite the metric induced by these sections on $\mathcal{O}_{\sigma^{-1}(U_q(\delta))}(-n\epsilon \cdot \sum_{q=1}^k E_q)$ as a power of the metric induced by the sections $1$ and $t$ on $\mathcal{O}_{\sigma^{-1}(U_q(\delta))}(-\sum_{q=1}^k E_q)$. Since the curvature form of $h_q$ is positive on $E_q$ the tensor product $\sigma^\ast h_0 \otimes h_q$ is a hermitian metric $h_{0,q}$ on $\widetilde{L}_{n|\sigma^{-1}(U_q(\delta))}$ with everywhere positive curvature form
\[ \omega_{0,q} := \frac{1}{\pi} \sigma^\ast \omega_0 + n\epsilon \cdot p_q^\ast \tau_0. \]

\noindent \textit{Step 1.} For $n \gg 0$ we can find sections $\sigma_0, \ldots, \sigma_N$ spanning $H^0(\widetilde{V}, \widetilde{L}_n)$ such that the induced hermitian metric $\widetilde{h}$ on $\widetilde{L}_n$ has a positive curvature form $\widetilde{\omega}$ satisfying
\[ \widetilde{\omega}_{|E_q} = n\epsilon \cdot \tau_0\ \mathrm{and\ } \widetilde{\omega}(P) = \frac{1}{\pi} (\sigma^\ast \omega_0)(P) + n\epsilon \cdot (p_q^\ast \tau_0)(P), \]
for all $q=1, \ldots, k$ and all points $P \in E_q$: For $n \gg 0$ the line bundle $\widetilde{L}_n$ is sufficiently ample such that the restriction maps
\begin{eqnarray*}
H^0(\widetilde{V}, \widetilde{L}_n \otimes \mathcal{O}_{\widetilde{V}}(-2\sum_{r=1}^k E_r)) & \rightarrow & \bigoplus_{r=1}^k H^0(E_r, \widetilde{L}_{n|E_r} \otimes \mathcal{O}_{E_r}(-2E_r)) = \\  
    & = & \bigoplus_{r=1}^k H^0(E_r, \mathcal{O}_{E_r}(-(n\epsilon+2) E_r)),
\end{eqnarray*}
\[ H^0(\widetilde{V}, \widetilde{L}_n \otimes \mathcal{O}_{\widetilde{V}}(-\sum_{r=1}^k E_r))  \rightarrow
    \bigoplus_{r=1}^k H^0(E_r, \mathcal{O}_{E_r}(-(n\epsilon+1) E_r)), \]
\[ H^0(\widetilde{V}, \widetilde{L}_n \otimes \mathcal{O}_{\widetilde{V}}(-\sum_{r \neq q} E_r)) \rightarrow 
   \bigoplus_{r \neq q} H^0(E_r, \mathcal{O}_{E_r}(-(n\epsilon+1) E_r)) \oplus H^0(E_q, \mathcal{O}_{E_q}(-n\epsilon E_q)) \]
are surjective for each $q = 1, \ldots, k$, by Serre Vanishing \cite[Thm.1.2.6]{LazPAG1}. For each $q = 1, \ldots, k$ we can thus find 
\begin{itemize}
\item sections in $H^0(\widetilde{V}, \widetilde{L}_n)$ restricting to $S^{n\epsilon+2}, S^{n\epsilon+1}T, \ldots, T^{n\epsilon+2}$ on $E_q$ (when divided by the square of the defining function of $E_q$) and vanishing to order $\geq 2$ on all exceptional divisors $E_r \neq E_q$,
\item sections in $H^0(\widetilde{V}, \widetilde{L}_n)$ restricting to (scalar multiples of) $S^{n\epsilon+1}, S^{n\epsilon}T, \ldots, T^{n\epsilon+1}$ on $E_q$ (when divided by the defining function of $E_q$) and vanishing to order $\geq 2$ on all exceptional divisors $E_r \neq E_q$, and
\item sections in $H^0(\widetilde{V}, \widetilde{L}_n)$ restricting on $E_q$ to (scalar multiples of) $S^{n\epsilon}, S^{n\epsilon-1}T, \ldots, T^{n\epsilon}$, a basis of $H^0(E_q, \mathcal{O}_{E_q}(-n\epsilon E_q))$ and vanishing to order $\geq 2$ on all exceptional divisors $E_r \neq E_q$.
\end{itemize}

\noindent Uniting a basis of $H^0(\widetilde{V}, \widetilde{L}_n \otimes \mathcal{O}_{\widetilde{V}}(-2\sum_{r=1}^k E_r)) \subset H^0(\widetilde{V}, \widetilde{L}_n)$ with suitable linear combinations of the sections above we obtain a set of sections $\sigma_0, \ldots, \sigma_N$ spanning $H^0(\widetilde{V}, \widetilde{L}_n)$ which can be subdivided in three disjoint parts for each $q = 1, \ldots, k$: In terms of the $(x,t)$ coordinates in one of the charts around $E_q$ the sections are either of the form
\[ \sqrt{\binom{n\epsilon}{j}} t^j + x^3 \cdot f_j(x,t),\ j = 0, \ldots, n\epsilon,\ \mathrm{or} \]
\[ x \cdot (\sqrt{\binom{n\epsilon+1}{l}} t^l + x \cdot g_l(x,t)),\ l = 0, \ldots, n\epsilon + 1,\ \mathrm{or}\ x^2 \cdot h(x,t), \]
where the $f_j$, $g_l$ and $h$ are regular functions in $x,t$ and there exists exactly one section of the respective form for each $j$ and each $l$. By multiplying with $s^{n\epsilon}$ and using $t \cdot s = 1$ and $x = sy$ we obtain expressions for the sections in the $(s,y)$-coordinates of the other chart around $E_q$, and these expressions in $s,y$ are completely similar to those in $x,t$.

\noindent Let $\widetilde{h}$ denote the hermitian metric on $\widetilde{L}_n$ and $\widetilde{\omega}$ the K\"ahler form on $\widetilde{V}$ induced by the sections $\sigma_0, \ldots, \sigma_N$. Using the coordinates $(x,t)$ around $E_q$ (the calculations are completely analogous when using the coordinates $(s,y)$ of the other chart around~$E_q$) the Taylor series expansion of $\log$ shows that 
\begin{eqnarray*} 
\widetilde{\omega}(0,0) & = & \left( \frac{i}{2\pi} \partial\overline{\partial} \log (\sum_{j=0}^N \sigma_j(x,t)\overline{\sigma_j(x,t)}) \right)(0,0) = 
\frac{1}{\pi} \left( dx \wedge d\overline{x} + n\epsilon dt \wedge d\overline{t} \right) = \\
 & = & \frac{1}{\pi} \sigma^\ast \omega_0(0,0) + n\epsilon p_2^\ast \tau_0(0,0), 
\end{eqnarray*}
because $F := \sum_{j=0}^N \sigma_j(x,t)\overline{\sigma_j(x,t)}$ is a power series in $x,\overline{x}, t, \overline{t}$, and the only terms of order $\leq 2$ in $F$ are $1, x\overline{x}, n\epsilon t\overline{t}$. Similarly in other points $P = (0,t_0) \in E_q$: Choose a unitary matrix 
$\begin{pmatrix} a & b \\ c & d \end{pmatrix} \in U(2)$ such that $t_0 = \frac{c}{a}$, and rewrite $F$ in terms of the coordinates $(x^\prime,t^\prime)$ given by
\[ x = x^\prime(a+bt^\prime),\ t = \frac{c+dt^\prime}{a+bt^\prime}. \]
Then $F \cdot |a+bt^\prime|^{2n\epsilon}$ is a power series in $x^\prime, \overline{x^\prime}, t^\prime, \overline{t^\prime}$,
and as before the only terms of order $\leq 2$ are $1, x^\prime\overline{x^\prime}, n\epsilon t^\prime\overline{t^\prime}$ because $\begin{pmatrix} a & b \\ c & d \end{pmatrix} \in U(2)$ implies that 
\[ (1+t\overline{t}) \cdot |a+bt^\prime|^2 = |a+bt^\prime|^2 + |c+dt^\prime|^2 = 1+t^\prime\overline{t^\prime}. \]
Since $\partial\overline{\partial}\log |a+bt^\prime|^2 = 0$ we conclude once again that
\[ \widetilde{\omega}(P) =  
    \frac{1}{\pi} dx^\prime \wedge d\overline{x^\prime} + n\epsilon dt^\prime \wedge d\overline{t^\prime} = 
    \frac{1}{\pi} \sigma^\ast \omega_0(P) + n\epsilon p_2^\ast \tau_0(P). \]

\noindent Finally, the statement on $\widetilde{\omega}_{|E_q}$ follows because $F_{|{x = 0}} = (1 + t\overline{t})^{n\epsilon}$.

\noindent \textit{Step 2.} On the tubular neighborhoods $\sigma^{-1}(U_q(\delta))$ we glue the metrics $\widetilde{h}$ on $\widetilde{L}_n$ and $h_{0,q}$ on $\widetilde{L}_{n|\sigma^{-1}(U_q(\delta))}$: To this purpose we use a partition of unity $(\overline{\rho}_1, \overline{\rho}_2)$ subordinate to the open cover ($\widetilde{V} - \sigma^{-1}(U_q(\delta/2)), \sigma^{-1}(U_q(\delta))$ of $\widetilde{V}$. We construct $\overline{\rho}_1, \overline{\rho}_2$ from a partition of unity $(\rho_1, \rho_2)$ subordinate to the open cover $(\mathbb{R} - (-\delta^2/4, \delta^2/4), (-\delta^2, \delta^2))$ of $\mathbb{R}$, by setting
\[ \overline{\rho}_i(x,t) := \rho_i(|\sigma(x,t)|^2) = \rho_i(|\sigma(s,y)|^2), i = 1,2.  \]
Note that we can choose $\rho_i$ such that the first-order partial derivatives of $\overline{\rho}_i$ are bounded by a constant multiple of $1/\delta$ and the second-order partial derivatives of $\overline{\rho}_i$ by a constant multiple of $1/\delta^2$.

\noindent Let $\sigma_0, \ldots, \sigma_N$ be the global sections of $\widetilde{L}_n$ constructed in Step 1. Then in coordinates $(x,t)$ around $E_q$ the metric $\widetilde{h}$ induced by these sections is given by 
\[ \widetilde{h}(\sigma(x,t)) = |\sigma(x,t)|/(\sum_{j=0}^N |\sigma_j(x,t)|^2)^{\frac{1}{2}} = |\sigma(x,t)| \cdot e^{-\frac{1}{2}\phi_1(x,t)}, \] with $\phi_1(x,t) = \log (\sum_{j=0}^N |\sigma_j(x,t)|^2)$, for each section $\sigma$ of $\widetilde{L}_n$. Similarly,
\[ h_{0,q}(\sigma(x,t)) = |\sigma(x,t)| \cdot e^{-\frac{1}{2}\phi_2(x,t)} \]
with $\phi_2 = n\epsilon \log(1+t\overline{t}) + \frac{1}{\pi} x\overline{x}(1+t\overline{t})$. Then the glued metric $\overline{h}$ can be constructed as
\[ \overline{h}(\sigma(x,t)) = |\sigma(x,t)| e^{-\frac{1}{2}(\overline{\rho}_1\phi_1 + \overline{\rho}_2\phi_2)} = |\sigma(x,t)|e^{-\frac{1}{2}(\phi_1 + \overline{\rho}_2(\phi_2 - \phi_1))}. \]
Its curvature is
\[ \frac{i}{2\pi}\left[ \partial\overline{\partial}\phi_1 + \partial\overline{\partial}(\overline{\rho}_2(\phi_2 - \phi_1)) \right] = 
   \frac{i}{2\pi}\left[ \partial\overline{\partial}\phi_1 + \partial(\overline{\rho}_2 \cdot \overline{\partial}(\phi_2 - \phi_ 1) + 
                                                                                                  \overline{\partial}\overline{\rho}_2 \cdot (\phi_2 - \phi_1)) \right] = \]
\[ = \frac{i}{2\pi}\left[ \partial\overline{\partial}\phi_1 + \partial\overline{\rho}_2 \cdot \overline{\partial}(\phi_2 - \phi_1) +
     \overline{\rho}_2 \cdot \partial\overline{\partial}(\phi_2 - \phi_1) + \partial\overline{\partial}\overline{\rho}_2 \cdot (\phi_2 - \phi_1) + \overline{\partial}\overline{\rho}_2 \cdot \partial(\phi_2 - \phi_1) \right]. \]
The Taylor series expansion of $\log$ and the properties of the sections $\sigma_j$ discussed in Step 1 show that $\phi_2 - \phi_1$ expands to a power series in $x, t$ only containing terms of order $\geq 3$. Hence the remarks on the partial derivatives of $\overline{\rho}_1$ and $\overline{\rho}_2$ imply that around $(x,t) = (0,0)$ all summands but the first converge everywhere on $\sigma^{-1}(U_q(\delta)) - \sigma^{-1}(U_q(\delta/2))$ to $0$ when $\delta$ tends to~$0$. Since $\frac{i}{2\pi}\partial\overline{\partial} \phi_1$ is strictly positive being the curvature of $\widetilde{h}$, it follows that $\overline{h}$ is a positive metric on $\widetilde{L}^n$ for $n$ sufficiently large and $\delta$ sufficiently small. Calling $\overline{\omega}$ the K\"ahler form obtained as the curvature of $\overline{h}$ we have that
\[ \overline{\omega}_{|\sigma^{-1}(U_q(\delta/2))} = \omega_{0,q}. \]

\noindent \textit{Step 3.} We glue in standard K\"ahler balls of radius $\sqrt{\frac{\epsilon}{\pi}}$ to $(\widetilde{V}, \frac{1}{n}\overline{\omega})$ replacing the exceptional divisors $E_q$: Let $\mathcal{L}(r)$ denote the preimage of the ball $B(r)$ centered in $0 \in \mathbb{C}^2$ under the standard blow-up $\sigma$ of $\mathbb{C}^2$ in $0$ and let $\rho(\delta,\epsilon)$ denote the K\"ahler form
\[ \rho(\delta,\epsilon) := \delta \cdot \sigma^\ast \omega_0 + \epsilon \cdot p_2^\ast \tau_0 \]
on $\mathcal{L}(r)$, for $\delta, \epsilon > 0$. The construction of $\overline{\omega}$ implies that an appropriate rescaling of the $(x,y)$-coordinates around $x_q$ without changing the homogeneous coordinates $S,T$ on $E_q$ yields holomorphic embeddings
\[ \phi_q: \mathcal{L}(1 + \epsilon_q) \hookrightarrow \widetilde{V} \]
such that $\phi_q^\ast(\frac{1}{n}\overline{\omega}) = \rho(\delta_q,\epsilon)$, for some $\epsilon_q, \delta_q > 0$. The symplectic blow-down construction in \cite[\S 5.4, in particular \S 5.4.A]{McDP94} shows that there exist a K\"ahler form on $V$ representing $c_1(L)$ and a K\"ahler embedding of $k$ standard balls of radii $\sqrt{\frac{\epsilon}{\pi}}$ into $V$, wrt this K\"ahler form.

\vspace{0.2cm}

\noindent The opposite direction of Thm.~\ref{packing-thm} follows immediately by using symplectic blow up constructions on K\"ahler manifolds as described in \cite[\S 5.3, in particular \S 5.3.A]{McDP94} (see also \cite[Lem.5.3.17]{LazPAG1}). \hfill $\Box$

\begin{rem}
A proof that is just notationally more involved will show the analogous theorem for projective complex manifolds of arbitrary dimension and their multi-point Seshadri constants. For the one-point case in arbitrary dimensions see the results of  Witt Nystr\"om   
 \cite{WN15a, WN15b}.
\end{rem}

\noindent We construct the K\"ahler form $\omega$ and the Fubini-Study K\"ahler packing postulated in Cor.~\ref{FS-packing-cor} from the K\"ahler form and the standard K\"ahler packing appearing in Thm.~\ref{packing-thm} by suitably modifying the symplectomorphism between flat K\"ahler balls and Fubini-Study balls given by
\[ \phi: (B_0(1), \omega_{\mathrm{std}}) \rightarrow (\mathbb{C}^2, \omega_{\mathrm{FS}}),\ \  
   (z_1, z_2) \mapsto \frac{1}{(1- \sum_{i=1}^2 |z_i|^2 )^\frac{1}{2}} \cdot (z_1, z_2) \]
(see \cite[Ex.7.14]{McDS95}) and its inverse, to glue in a Fubini-Study ball into a flat K\"ahler ball and vice versa:

\begin{lem} \label{FS-std-glue-lem}
For all $R, \epsilon, \lambda > 0$ there is a K\"ahler form $\tau = \tau(R, \epsilon, \lambda)$ on $\mathbb{C}^2$ such that
\[ \tau_{|B_0(R)} = \lambda^2 \cdot \frac{1}{R^2+1} \omega_{\mathrm{std}}\ \mathrm{and}\ 
   \tau_{|\mathbb{C}^2 - B_0(R+\epsilon)} = \lambda^2 \omega_{\mathrm{FS}}. \]
For all $0 < \epsilon < \frac{1}{2}$, $\lambda > 0$ there is a K\"ahler form $\sigma$ on $B_0(1)$ such that
\[ \sigma_{|B_0(1-2\epsilon)} = \lambda^2 \cdot \frac{1}{4\epsilon(1-\epsilon)} \omega_{\mathrm{FS}}\ \mathrm{and}\ 
   \sigma_{|B_0(1) - B_0(1-\epsilon)} = \lambda^2 \omega_{\mathrm{std}}. \]
\end{lem}
\begin{proof}
We obtain $\tau$ as the pull back of $\lambda^2 \omega_{\mathrm{FS}}$ via the monotone embedding given in polar coordinates $u \in S^3, r \in (0, \infty)$ on $\mathbb{C}^n - \{0\}$ by
\[ \begin{array}{rclccl}
    (u,r) & \mapsto & (u, r/R) & \mapsto & (u, r(R^2-r^2)^{-\frac{1}{2}}) & \mathrm{on}\ B_0(R) - \{0\}, \\
    z & \mapsto & z & & & \mathrm{on}\ \mathbb{C}^n - B_0(R+\epsilon)
    \end{array} \]
and smoothened on $B_0(R+\epsilon) - B_0(R)$. Then $\tau$ is a K\"ahler form by \cite[\S 5.1]{McDP94} and satisfies the requested properties.

\noindent Similarly, we obtain $\sigma$ as the pull back of $\lambda^2 \omega_{\mathrm{std}}$ via the monotone embedding given by
\[ \begin{array}{rclccl}
    (u,r) & \mapsto & \left(u, \frac{r}{2(\epsilon(1-\epsilon))^{\frac{1}{2}}}\right) & \mapsto & 
    \left(u, \frac{r}{(r^2+2(\epsilon(1-\epsilon))^{\frac{1}{2}})^{\frac{1}{2}}}\right) & \mathrm{on}\ B_0(1-2\epsilon) - \{0\}, \\
    z & \mapsto & z & & & \mathrm{on}\ B_0(1) - B_0(1-\epsilon)
    \end{array} \]
and smoothened on $B_0(1-\epsilon) - B_0(1-2\epsilon)$.
\end{proof}

\noindent Note that rescaling yields symplectomorphisms 
\[ (B_0(R), \lambda^2 \cdot \frac{1}{R^2+1} \omega_{\mathrm{std}}) \rightarrow 
   (B_0(\frac{R}{(R^2+1)^{\frac{1}{2}}}), \lambda^2 \omega_{\mathrm{std}}) \]
and 
\[ (B_0(1-2\epsilon), \lambda^2 \cdot \frac{1}{4\epsilon(1-\epsilon)} \omega_{\mathrm{FS}}) \rightarrow 
   (B_0(\frac{1-2\epsilon}{2(\epsilon(1-\epsilon))^\frac{1}{2}}), \lambda^2 \omega_{\mathrm{FS}}). \]

\noindent Then Cor.~\ref{FS-packing-cor} is an immediate consequence of Thm.~\ref{packing-thm} and Lem.~\ref{FS-std-glue-lem}. Note that $\omega$ represents the same first Chern class as the K\"ahler form constructed in Thm.~\ref{packing-thm} because $\omega$ is constructed as its pullback via a map homotopic to identity.

\section{K\"ahler packings on toric surfaces} \label{toric-sec}

\noindent To state exact results we fix notations and recall some facts on toric varieties following \cite{FulTV}: $N \cong \mathbb{Z}^n$ denotes a lattice of rank $n$, $M := \mathrm{Hom}_{\mathbb{Z}}(N, \mathbb{Z})$ the dual lattice of $N$, $\Delta$ a fan of rational strongly convex polyhedral cones $\sigma \subset N_{\mathbb{R}} := N \otimes_{\mathbb{Z}} \mathbb{R}$ and $X(\Delta)$ the $n$-dimensional toric variety associated to $\Delta$, together with the natural action of the torus $T_N \cong (\mathbb{C}^\ast)^n$ on $X(\Delta)$.

\noindent A toric variety $X(\Delta)$ is covered by affine toric variety $U_\sigma := \mathbb{C}[\sigma^\vee \cap M]$, $\sigma \in \Delta$, where $\sigma^\vee = \{u \in M_{\mathbb{R}}: \langle u, v \rangle \geq 0\ \mathrm{for\ all\ } v \in \sigma\}$ is the dual cone to $\sigma$ in $M_{\mathbb{R}}$ and $\sigma^\vee \cap M$ is a semigroup in $M$.

\noindent A toric variety $X(\Delta)$ is complete if the support $|\Delta| = \bigcup_{\sigma \in \Delta} \sigma$ covers all of $N_{\mathbb{R}}$.

\noindent $T_N$-invariant morphisms $X(\Delta^\prime) \rightarrow X(\Delta)$ between two $n$-dimensional toric varieties correspond to abelian group homomorphisms $\alpha: N \rightarrow N$ such that $\alpha_{\mathbb{R}} : = \alpha \otimes_{\mathbb{Z}} \mathbb{R}: N_{\mathbb{R}} \rightarrow N_{\mathbb{R}}$ maps each cone of $\Delta^\prime$ into a cone of $\Delta$.

\noindent Cones $\sigma \in \Delta$ of maximal dimension $n$ correspond to $T_N$-fixed points $x_\sigma \in X(\Delta)$ whereas cones in $\Delta$ of lower dimension correspond to higher-dimensional $T_N$-orbits in $X(\Delta)$. In particular the rays $\tau \in \Delta$ correspond to $(n-1)$-dimensional $T_N$-orbits whose closures are the irreducible $T_N$-stable Weil divisors on $X(\Delta)$. For a ray $\tau \in \Delta$ let $v_\tau \in N$ denote the generator of $\tau$ in $N$, and $D_\tau$ the Weil divisor corresponding to $\tau$.

\noindent A $T_N$-stable Cartier divisor $D$ on $X(\Delta)$ is defined by elements $u_D(\sigma) \in M$ for each $\sigma \in \Delta$ of maximal dimension $n$ such that $u_D(\sigma) - u_D(\sigma^\prime) \in (\sigma \cap \sigma^\prime)^\perp$. The corresponding Weil divisor is given as $D = -\sum_{\sigma \supset \tau \in \Delta\ \mathrm{ray}} \langle u_D(\sigma), v_\tau \rangle D_\tau$.

\noindent $X(\Delta)$ is nonsingular if each cone $\sigma \in \Delta$ is generated by $n$ vectors $v_1, \ldots, v_n \in N$ that are a $\mathbb{Z}$-basis of $N$. In that case the $T_N$-stable Weil divisors coincide with the $T_N$-stable Cartier divisors. Note also that the dual cone $\sigma^\vee \subset M_{\mathbb{R}}$ and the semigroup $\sigma^\vee \cap M \subset M$ are generated by a $\mathbb{Z}$-basis of $M$ if $\sigma$ is generated by a $\mathbb{Z}$-basis.

\begin{prop}[{\cite[Sec.2.4]{FulTV}}] \label{toric-blowup-prop}
Let $X(\Delta)$ be a nonsingular toric variety and $\sigma \in \Delta$ a cone of maximal dimension corresponding to the $T_N$-fixed point $x_\sigma$. 

\noindent Then the blow up of $X(\Delta)$ in $x_\sigma$ is given by the morphism $X(\Delta^\prime) \rightarrow X(\Delta)$ where $\Delta^\prime$ is constructed from $\Delta$ by subdividing $\sigma$ into $n$ cones $\sigma_i$ generated by
\[ v_1, \ldots, v_{i-1}, v_1 + \cdots + v_n, v_{i+1}, \ldots, v_n \]
where $v_1, \ldots, v_n \in N$ are spanning $\sigma$ and also are a $\mathbb{Z}$-basis of the lattice $N$.

\noindent The exceptional divisor on $X(\Delta^\prime)$ is $T_N$-stable and corresponds to the ray $\tau$ generated by $v_1 + \cdots + v_n$. \hfill $\Box$
\end{prop}

\noindent A $T_N$-stable Cartier divisor $D = \sum_{\tau \in \Delta\ \mathrm{ray}} a_\tau D_\tau$ on $X(\Delta)$ defines a rational convex polyhedron in $M_{\mathbb{R}}$, as
\[ P_D = \{ u \in M_{\mathbb{R}}: \langle u, v_\tau \rangle \geq - a_\tau\ \mathrm{for\ all\ rays\ } \tau \in \Delta \}. \]
The elements of $P_D \cap M$ correspond to $T_N$-stable generators of the space of global sections of $\mathcal{O}_{X(\Delta)}(D)$.

\noindent $D$ is ample if and only if the elements $u_D(\sigma) \in M$ describing $D$ are exactly the vertices of $P_D$ (see \cite[p.70]{FulTV}). If (and only if) such an ample $T_N$-stable divisor exists on $X(\Delta)$ and $X(\Delta)$ is complete then the toric variety $X(\Delta)$ is projective.

\noindent The following two results can be found in \cite[\S 4]{BDH+09} but we present the proof as a convenience to the reader and because some details are needed later on.

\begin{prop}[{\cite[\S 4]{BDH+09}}] \label{ample-blowup-prop}
Let $X(\Delta)$ be an $n$-dimensional non-singular projective toric variety, $\sigma \in \Delta$ a cone of maximal dimension $n$ with corresponding $T_N$-fixed point $x_\sigma$ and $\pi: X(\Delta^\prime) \rightarrow X(\Delta)$ the blow up of $X(\Delta)$ in $x_\sigma$, with exceptional divisor $E_\sigma$, as constructed in Prop.~\ref{toric-blowup-prop}. Let $D$ be an ample $T_N$-stable Cartier divisor on $X(\Delta)$ with associated polyhedron $P_D$. 
\begin{itemize}
\item[(a)] Let $v_1, \ldots, v_n \in N$ be the generators of the edges of $\sigma$ and $w_1, \ldots, w_n \in M$ the generators of the edges of $\sigma^\vee$. If $\sigma^\prime \in \Delta$ is a cone of maximal dimension $n$ intersecting $\sigma$ in the facet spanned by $v_1, \ldots, v_{i-1}, v_{i+1}, \ldots, v_n$ then the vertices $u_D(\sigma)$ and $u_D(\sigma^\prime)$ of $P_D$ differ by a multiple $\epsilon_i w_i$ of $w_i$, $\epsilon_i > 0$.
\item[(b)]$D_\epsilon := \pi^\ast D - \epsilon E_\sigma$ is an ample ($\mathbb{Q}$-)divisor if and only if $\epsilon < \min_{i=1, \ldots, n} \epsilon_i$, and its associated polyhedron $P_{D_\epsilon}$ is obtained from $P_D$ by taking away the simplex with vertex $u_D(\sigma)$ and edges $\epsilon w_i$ starting in $u_D(\sigma)$. 
\end{itemize}
\end{prop}
\begin{proof}
Let $v_i^\prime$ span the one edge $\tau^\prime$ of $\sigma^\prime$ that is not an edge of $\sigma$. In particular, $v_i^\prime$ and $v_i$ lie on different sides of the hyperplane spanned by $v_1, \ldots, v_{i-1}, v_{i+1}, \ldots, v_n$. Consequently, $\langle w_i, v_i^\prime \rangle < 0$.

\noindent If $D = \sum_{\tau \in \Delta\ \mathrm{ray}} a_\tau D_\tau$ then $u_D(\sigma) = - \sum_{i=1}^n a_iw_i$ because $\langle u_D(\sigma), v_i \rangle = -a_i$, and the $w_i$ are a $\mathbb{Z}$-basis of $M$ dual to the $\mathbb{Z}$-basis $v_i$ of $N$. Since $D$ is ample we must have $\langle u_D(\sigma), v_i^\prime \rangle > -a_{\tau^\prime}$

\noindent All these facts imply that there is $\epsilon_i > 0$ such that $\langle u_D(\sigma) + \epsilon_i w_i, v_i^\prime \rangle = -a_{\tau^\prime}$ whereas  $\langle u_D(\sigma) + \epsilon_i w_i, v_j \rangle = -a_j$ for $j = 1, \ldots, i-1, i+1, \ldots, n$. By the same argument as before, we may conclude $u_D(\sigma) + \epsilon_i w_i = u_D(\sigma^\prime)$, and (a) is shown.

\noindent For (b) note that $w_1 - w_i, \ldots, w_i, \ldots, w_n - w_i$ generate the semigroup $\sigma_i^\vee \cap M$ as these elements are a dual basis to $v_1, \ldots, v_1+\cdots+v_n, \ldots, v_n$. Furthermore,
\[ D_\epsilon = \pi^\ast D - \epsilon E_\sigma = \sum_{\tau \in \Delta\ \mathrm{ray}} a_\tau D_\tau + (\sum_{\tau \in \Delta\ \mathrm{ray}} a_\tau) E_\sigma - \epsilon E_\sigma. \]
Consequently, $\sigma_i$ corresponds to the vertex of $P_{D_\epsilon}$ given by
\[ -\sum_{k=1, k \neq i}^n a_k(w_k - w_i) - (\sum_{k=1}^n a_k)w_i + \epsilon w_i =  -\sum_{k=1}^n a_kw_k + \epsilon w_i = u_D(\sigma) + \epsilon w_i. \]
So $D_\epsilon$ is ample if $\epsilon < \epsilon_i$ for all $i=1, \ldots, n$, and the polyhedron $P_{D_\epsilon}$ replaces the vertex $u_D(\sigma)$ of $P_D$ by the vertices $u_D(\sigma) + \epsilon w_i$, cutting off the simplex as described. 
\end{proof}

\begin{cor} \label{toric-Sesh-cor} 
Let $X(\Delta)$ be an $n$-dimensional non-singular projective toric variety, let $\pi: X(\Delta^\prime) \rightarrow X(\Delta)$ be the blow up of $X(\Delta)$ in several $T_N$-fixed points $x_{\sigma_1}, \ldots, x_{\sigma_k}$, with exceptional divisors $E_k$, and let $D$ be an ample $T_N$-stable Cartier divisor on $X(\Delta)$. Then $\pi^\ast D - \epsilon \sum_{l=1}^k E_k$ is an ample ($\mathbb{Q}$)-divisor if, and only if,
\[ \epsilon < \frac{1}{2} \min_{1 \leq l \leq k, 1 \leq i \leq n} \epsilon_i^l, \]
where the $\epsilon_i^l > 0$ are those numbers determined for each edge $\tau_i$ of the cones $\sigma_l$ in Prop.~\ref{ample-blowup-prop}. \hfill $\Box$ 
\end{cor}

\begin{exm} \label{toric-ex}
Consider the nonsingular toric projective variety $\mathbb{P}^2_{\mathbb{C}}$, on whom the torus $T_N \cong (\mathbb{C}^\ast)^2$ acts as $(s,t) \cdot [X:Y:Z] = [sX:tY:Z]$. The three cones of maximal dimension in the fan $\Delta$ describing $\mathbb{P}^2_{\mathbb{C}} = X(\Delta)$ are separated by the rays $\tau_X$, $\tau_Z$ and $\tau_Y$ spanned by $v_X = (1,0)$, $v_Z = (-1,-1)$ and $v_Y = (0,1)$ in $N$, respectively. $\sigma_Z$, $\sigma_Y$ and $\sigma_X$ correspond to the three $T_N$-fixed points $x_Z = [0:0:1]$, $x_Y = [0:1:0]$ and $x_X = [1:0:0]$ in $\mathbb{P}^2_{\mathbb{C}}$, respectively. The rays $\tau_X$, $\tau_Z$ and $\tau_Y$ correspond to the $T_N$-stable divisors $D_X = \{X = 0\}$, $D_Z = \{Z = 0\}$ and $D_Y = \{Y = 0\}$, respectively. These are lines in $\mathbb{P}^2_{\mathbb{C}}$, hence linearly equivalent divisors. For an integer $k > 0$ the moment polytope $P_D$ of the divisor $D := kD_Z$ is 
\[ P_D = \{ (u_1, u_2) \in \mathbb{R}^2: u_1, u_2 \geq 0, u_1 + u_2 \leq k \}. \]

\begin{minipage}{6cm}{
\includegraphics[width=45mm]{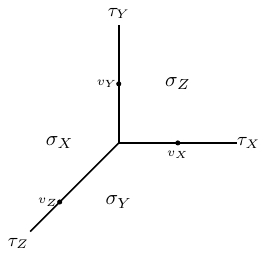}}
\end{minipage}
\begin{minipage}{6cm}{
\includegraphics[width=45mm]{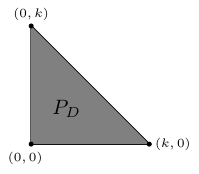}}
\end{minipage}

\noindent Blowing up the $T_N$-fixed points $x_Z$, $x_Y$ and $x_X$ yields a toric variety $\widetilde{X} = X(\widetilde{\Delta})$ with fan $\widetilde{\Delta}$ obtained from $\Delta$ by splitting up the cones $\sigma_Z$, $\sigma_Y$ and $\sigma_X$ with rays spanned by $v_{XY} = (1,1)$, $v_{ZX} = (0,-1)$ and $v_{YZ} = (-1,0)$, respectively. These rays correspond to the exceptional divisors $E_Z$, $E_Y$ and $E_X$, respectively, of the blow up morphism $\pi: \widetilde{X} \rightarrow \mathbb{P}^2_{\mathbb{C}}$. The rays $\tau_X$, $\tau_Z$ and $\tau_Y$ correspond to the strict $\pi$-transforms of $D_X$, $D_Z$ and $D_Y$, respectively. Consequently, for $\widetilde{D} = k\pi^\ast D_Z - lE_X - lE_Z - lE_Y$ the moment polytope $P_{\widetilde{D}}$ is given as
\[ P_{\widetilde{D}} = \{ (u_1, u_2) \in \mathbb{R}^2: 0 \leq u_1  \leq k-l, 0 \leq u_2  \leq k-l, l \leq u_1 + u_2 \leq k  \}, \] 
and $\widetilde{D}$ is ample if and only if $0 < l < \frac{k}{2}$. This implies that 
\[ \epsilon(D; x_Z, x_Y, x_X) = \frac{1}{2}. \]

\begin{minipage}{6cm}{
\includegraphics[width=45mm]{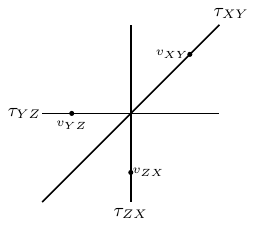}}
\end{minipage}
\begin{minipage}{6cm}{
\includegraphics[width=45mm]{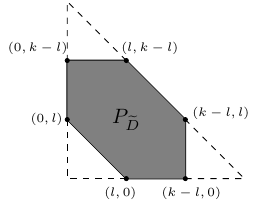}}
\end{minipage}
\end{exm}

\noindent We now construct approximations to K\"ahler packings on toric varieties using $T_N$-stable global sections of high enough multiples of $L$. We also investigate how the toric symplectic moment maps induced by these global sections pull back to the embedded balls. 

\noindent Recall that on an $n$-dimensional projective toric variety $X$ with very ample divisor $D$, a toric moment map is given by
\[ \mu: X \rightarrow 
    \frac{1}{\sum_{u \in P_D \cap M} |x^u|^2} \cdot \sum_{x \in P_D \cap M} |x^u|^2 \cdot u \in \mathbb{R}^n \]
where the $x^u \in H^0(X, \mathcal{O}_X(D))$ are a basis of $T_N$-stable global sections. Then $\mu(X) = P_D$ \cite[Ch.4.2]{FulTV} and $\mu$ is a symplectic moment map for the $S_N$-action on $X$  where $S_N \subset T_N$ is the real torus subgroup of $T_N$ given by points $(z_1, \ldots, z_n)$ with $|z_i| = 1$ \cite[Ex.5.48]{McDS95}. These properties do not change when we multiply the $x^u$ with arbitrary constants $c_u \in \mathbb{C}^\ast$. Furthermore, 
\[ \mu_{\mathrm{std}}: B_0(r) \rightarrow \mathbb{R}^n, (z_1, \ldots, z_n) \mapsto (|z_1|^2, \ldots, |z_n|^2) \]
is a symplectic moment map for the standard $S_N$-action on a ball $B_0(r)$.

\begin{thm} \label{approx-packing-thm}
Let $X(\Delta)$ be a nonsingular projective toric surface and $\pi: X(\Delta^\prime) \rightarrow X(\Delta)$ the blow-up of $T_N$-fixed points $x_\sigma$ corresponding to $2$-dimensional cones $\sigma \in \Delta$, with $E_\sigma \subset X(\Delta^\prime)$ the exceptional divisor over $x_\sigma$.

\noindent Let $L$ be an ample divisor over $X(\Delta)$ and let $0 < \epsilon \in \mathbb{Q}$ such that $L_\epsilon := \pi^\ast L - \epsilon \cdot \sum_\sigma E_\sigma$ is ample on $X(\Delta^\prime)$. Then for $k \gg 0$ sufficiently divisible and $\delta > 0$ there exist $T_N$-stable global sections $s_0^{(\delta)}, \ldots, s_{N_k}^{(\delta)} \in H^0(X(\Delta), L^{\otimes k\epsilon})$ inducing the K\"ahler form $\omega_\delta$ on $X(\Delta)$ and the moment map $\mu_\delta: X(\Delta) \rightarrow \mathbb{R}^2$, and there exist embeddings $\phi_\sigma^{(\delta)}: B_0(\sqrt{\frac{\epsilon}{\pi}}) \rightarrow X(\Delta)$ with $\phi_\sigma^{(\delta)}(0) = x_\sigma$ such that
\begin{enumerate}
\item $\phi_\sigma^{(\delta)\ast} \omega_\delta$ approximates $\omega_{\mathrm{std}}$ on $B_0(\sqrt{\frac{\epsilon}{\pi}})$  for $\delta \rightarrow 0$, and
\item $\mu_\delta \circ \phi_\sigma^{(\delta)}: B_0(\sqrt{\frac{\epsilon}{\pi}}) \rightarrow \mathbb{R}^2$ approximates the standard moment map $\mu_{\mathrm{std}}$ on $B_0(\sqrt{\frac{\epsilon}{\pi}})$  for $\delta \rightarrow 0$.
\end{enumerate}
\end{thm}
\begin{proof}
Let $D$ be a $T_N$-stable Cartier divisor on $X(\Delta)$ such that $L = \mathcal{O}_{X(\Delta)}(D)$, and choose a sufficiently divisible $k \gg 0$. For a $2$-dimensional cone $\sigma \in \Delta$ the two generators $w_1, w_2 \in M$ of the edges of $\sigma^\vee \cap M$ correspond to affine coordinates $z_1, z_2$ on $U_\sigma \cong \mathrm{Spec}\ \mathbb{C}[\sigma^\vee \cap M] \cong \mathbb{A}^2_{\mathbb{C}}$ centered in $x_\sigma$. Then all the $T_N$-stable global sections of $L^{\otimes k\epsilon}$ can be written as monomial terms $c_u z^u$, with $c_u \in \mathbb{C}$ and $u \in (P_{k\epsilon D} - u_{k\epsilon D}(\sigma)) \cap M$.

\noindent Since $\epsilon < \epsilon(L; x_1, \ldots, x_k)$ Cor.~\ref{toric-Sesh-cor} implies that global sections $z_1^az_2^b$ with $0 \leq a+b \leq k\epsilon$ do not coincide with global sections of that form but with respect to affine coordinates on $U_{\sigma^\prime}$, $\sigma^\prime$ another $2$-dimesnional cone in $\Delta$. 

\noindent For each $2$-dimensional cone $\sigma \in \Delta$ we choose the coefficient $c_{a,b}$ of the monomial $z_1^az_2^b$, $0 \leq a+b \leq k\epsilon$, to be the square root of the coefficient of the monomial $|z_1|^{2a}|z_2|^{2b}$ in $(\delta^2 + |z_1|^2 + |z_2|^2)^{k\epsilon}$. For all the other $T_N$-stable global sections we choose the coefficient to be $1$.

\noindent The $T_N$-stable global sections provided with these coefficients induce a K\"ahler form $\omega_\delta$ whose restriction to $U_\sigma$ is
\[ \omega_{\sigma|U_\sigma} = \frac{1}{k} \cdot \frac{i}{2\pi} \partial\overline{\partial}\log \left( (\delta^2 + |z_1|^2 + |z_2|^2)^{k\epsilon} +\ \mathrm{terms\ in\ } |z_1|^2, |z_2|^2\ \mathrm{of\ order\ } > k\epsilon \right) \]
and a moment map whose restriction to $U_\sigma$ is
\[ \mu_{\delta|U_\sigma}(z) = \frac{1}{k} \cdot \frac{1}{\sum_{u \in P_{k\epsilon D} - k\epsilon u_D(\sigma)} |c_u|^2 |z|^{2u}}
    \sum_{u \in P_{k\epsilon D} - k\epsilon u_D(\sigma)} |c_u|^2 |z|^{2u} \cdot u. \]
For the embedding
\[ \phi_{\delta, R}: B_0(R) \rightarrow U_\sigma \subset X(\Delta), z \mapsto \delta \cdot z \]
we obtain that
\[ \begin{array}{rcl}
    \phi_{\delta, R}^\ast \omega_\delta & = & \frac{1}{k} \cdot \frac{i}{2\pi} \partial\overline{\partial}\log 
    \left( \delta^{2k\epsilon}(1 + |z_1|^2 + |z_2|^2)^{k\epsilon} +\ \mathrm{terms\ in\ of\ order\ } > k\epsilon\ \mathrm{in\ } \delta^2    
    \right) \\ 
      & \stackrel{\delta \rightarrow 0}{\longrightarrow} & \frac{1}{k} \cdot \frac{i}{\pi} \cdot k\epsilon \cdot         
         \partial\overline{\partial}\log (1 + |z_1|^2 + |z_2|^2) = \epsilon \cdot \omega_{\mathrm{FS}}. 
   \end{array} \]
Similarly, $\mu_\delta \circ \phi_{\delta, R}$ tends to 
\[ z \mapsto \frac{1}{k} \cdot \frac{1}{(1 + |z_1|^2 + |z_2|^2)^{k\epsilon}} \cdot 
                    \sum_{|u| \leq k\epsilon} \frac{|c_u|^2}{\delta^{2(k\epsilon - |u|)}} |z|^{2u} \cdot u \]
for $\delta \rightarrow 0$, and that is the toric moment map generated by the global sections $\binom{k\epsilon}{u_1+u_2} \binom{u_1+u_2}{u_1}z_1^{u_1}z_2^{u_2}$, $0 \leq u_1+u_2 \leq k\epsilon$, hence the symplectic moment map with respect to $\epsilon \cdot \omega_{\mathrm{FS}}$.

\noindent Rescaling the symplectomorphism 
$\phi: (B_0(1), \omega_{\mathrm{std}}) \rightarrow (\mathbb{C}^n,\omega_{\mathrm{FS}})$ discussed before Lem.~\ref{FS-std-glue-lem} and noting that $\phi$ is $S_N$-invariant we deduce properties (1) and (2).
\end{proof}

\noindent Note that the limits of $\omega_\delta$ and $\mu_\delta$ on $X(\Delta)$ do not exist, as the embeddings degenerate to maps onto points. Instead one needs the techniques in the proofs of Thm.~\ref{packing-thm} and Cor.~\ref{FS-packing-cor} to glue in flat resp. Fubini-Study balls.



\newcommand{\etalchar}[1]{$^{#1}$}
\def\cprime{$'$}

\end{document}